\documentclass[12pt,reqno]{amsart}   	
\usepackage[letterpaper, margin=1in]{geometry}
\usepackage{amssymb}
\usepackage{amsmath}
\usepackage{amsfonts}
\usepackage[utf8]{inputenc}
\usepackage{amsthm}
\usepackage{tikz-cd}
\usepackage{array}

\newtheorem{thm}{Theorem}[section]
\newtheorem{lemma}[thm]{Lemma}
\newtheorem{df}[thm]{Definition}

\newtheorem{ex}[thm]{Example}

\newtheoremstyle{remark}
    {\dimexpr\topsep/2\relax} 
    {\dimexpr\topsep/2\relax} 
    {}          
    {}          
    {\bfseries} 
    {.}         
    {.5em}      
    {}          

\theoremstyle{remark}
\newtheorem{remark}[thm]{Remark}

\newcommand{\Z}{\mathbb{Z}}

\newcommand{\R}{\mathbb{R}}

\newcommand{\N}{\mathbb{N}}

\title{A note on the real Jacobian conjecture in degree 7}
\author{Tomasz  Kowalczyk}
\date{}

\begin{document}

\keywords{real Jacobian conjecture, Newton polygon}
\subjclass[2020]{14R15, 14P99}
\maketitle

\begin{abstract}
Let $(p,q)$ be a Jacobian pair. We show that the real Jacobian conjecture holds if the degree of $p$ is 7 and the highest degree homogenous part is of the form $\alpha x^7 + \beta x^6y$ for $\alpha^2+\beta^2 \neq 0$. We then show that there are no atypical Jacobian pairs such that $\deg p =7$ and $\deg q$ is even and coprime with 7.
\end{abstract}

\section*{Introduction}
Let $(p,q):\R^2 \rightarrow \R^2$ be a polynomial map. We say that $(p,q)$ is a Jacobian pair if its Jacobian $J(p,q)$ does not vanish. The real Jacobian conjecture states that under the above assumption, $(p,q)$ is an injective mapping.
If a Jacobian pair $(p,q)$ satisfies the real Jacobian conjecture, we say that it is a typical Jacobian pair, otherwise we will call it an atypical Jacobian pair.

It is known since the work of Pinchuk \cite{pinchuk1994} that atypical Jacobian pairs exists. However some questions remains open. What are the degree conditions of $p$ and $q$ such that there may exist an atypical Jacobian pair? What is the smallest possible degree of an atypical Jacobian pair? Both questions were extensively studied. Regarding the first one, it is known that if the degree of $p$ is at most 6, then $(p,q)$ is a typical Jacobian pair \cite{braunEtAl2026}, see also \cite{braunsantosteixeira2022, braunfernandesOO2025, braunorefice2016,gwozdziewicz2001}. This result was a sequence of papers across almost 30 years. Regarding the second question, it was recently proved that there exists an atypical Jacobian pair with $p$ of degree 7 and $q$ of degree 29. And so, both questions are almost fully resolved. Again, this is obtained by a series of papers that focused on producing counterexamples of smaller and smaller degree (cf. \cite{braunFernandes2023,campbell2011, fernandes2022}). Let us also mention papers which provide sufficient conditions on $p$ and $q$ different, than the degrees \cite{braunfernandes2025,dominguesllibremello2026,lietian2025,tiancen2024}.


Structure of the paper is as follows. We start with some preliminary results. Section 2 contains the study of Jacobian pairs $(p,q)$ with $p$ of degree 7 and highest homogenous part equal to $\alpha x^7 + \beta x^6y$. In the last section we use those results to show that if $\deg f =7$ and the degree of $q$ is even and coprime with 7, then the pair $(p,q)$ is typical.

Our approach combines ideas and results from \cite{braunEtAl2026,gwozdziewicz2025}.

\section{Preliminaries}
Let $p(x,y)=\sum_{(i,j)\in\N^2} a_{ij} x^i y^j  \in \R[x,y]$ be a polynomial. Support of $p$, denoted by $\mathrm{supp}(p)$ is the set all pairs $(i,j) \in \N^2$ such that $a_{ij} \neq 0$. We define Newton polygon of $p$ to be the set
$$ \Delta_p= \mathrm{conv}( \{ (i,j)\in \N^2 : (i,j)\in \mathrm{supp}(p) \}) \subset \R^2 .$$
In the above, $\mathrm{conv}(A)$ is the convex hull of the set $A$. In order to work with the Newton polygon more easily we have to define several more notions.

Let now $\Delta \subset \R^2$ be a compact set and $\xi \in \R^2$ be a non-zero vector. We define 
$l(\Delta, \xi) = \max \{\langle \alpha,\xi \rangle : 
\alpha \in \Delta\}$ and
$\Delta^\xi = \{ \alpha \in \Delta : \langle \alpha,\xi \rangle =l(\Delta, \xi) \}$.
Note that if $\xi \in \R^2$ is a non-zero vector then $\Delta_p^\xi$ is either a vertex or an edge of the Newton polygon of $p$. Importance of these objects is shown in the following results

\begin{lemma}\cite[Corollary 2]{gwozdziewicz2001}\label{Delta}
    Let $p,q \in \R[x,y]$ and $\xi \in \R^2$ be a non-zero vector. Assume that $\Delta_p^\xi = \{\alpha \}$, $\Delta_q^\xi = \{ \beta\}$ and $\alpha, \beta$ are linearly independent. If $\alpha+\beta$ has even coordinate then $J(p,q)$ changes sign.
\end{lemma}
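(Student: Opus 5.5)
The plan is to compare $J(p,q)$ with the Jacobian of the $\xi$-initial monomials of $p$ and $q$, and then evaluate along a weighted curve where that initial term dominates.

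Write $\alpha=(\alpha_1,\alpha_2)$, $\beta=(\beta_1,\beta_2)$ and
$$p = a\,x^{\alpha_1}y^{\alpha_2} + p_1,\qquad q = b\,x^{\beta_1}y^{\beta_2}+q_1,\qquad ab\neq 0 .$$
Here every monomial $x^\gamma$ of $p_1$ satisfies $\langle\gamma,\xi\rangle< l(\Delta_p,\xi)=\langle \alpha,\xi\rangle$. Likewise every monomial $x^\delta$ of $q_1$ satisfies $\langle\delta,\xi\rangle<\langle\beta,\xi\rangle$. This uses the hypotheses $\Delta_p^\xi=\{\alpha\}$ and $\Delta_q^\xi=\{\beta\}$.

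For monomials we have the identity
$$J(x^\gamma,x^\delta)=\det(\gamma,\delta)\,x^{\gamma_1+\delta_1-1}y^{\gamma_2+\delta_2-1}.$$
Expanding $J(p,q)$ bilinearly, every monomial it contains has an exponent $e=\gamma+\delta-(1,1)$ with $\gamma\in\mathrm{supp}(p)$ and $\delta\in\mathrm{supp}(q)$. Its $\xi$-weight $\langle e,\xi\rangle$ equals $\langle\gamma+\delta,\xi\rangle-\xi_1-\xi_2$. This weight is at most $d:=\langle\alpha+\beta,\xi\rangle-\xi_1-\xi_2$, with equality only for the pair $(\gamma,\delta)=(\alpha,\beta)$. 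Hence the monomial $x^{\alpha_1+\beta_1-1}y^{\alpha_2+\beta_2-1}$ receives no other contribution, so no cancellation can occur. We conclude
$$J(p,q)=ab\det(\alpha,\beta)\,x^{\alpha_1+\beta_1-1}y^{\alpha_2+\beta_2-1}+R,$$
where all monomials of $R$ have $\xi$-weight $<d$. The leading coefficient is nonzero by the linear independence of $\alpha$ and $\beta$.

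Next substitute $x=s\,t^{\xi_1}$, $y=u\,t^{\xi_2}$ with $s,u\in\{\pm1\}$ and $t>0$. This is legitimate for arbitrary real $\xi$, since $t^{c}$ makes sense for $t>0$. Each monomial $x^e$ becomes $\pm t^{\langle e,\xi\rangle}$. There are finitely many monomials, so
$$J(p,q)(s t^{\xi_1},u t^{\xi_2}) = t^{d}\bigl(ab\det(\alpha,\beta)\,s^{\alpha_1+\beta_1-1}u^{\alpha_2+\beta_2-1}+o(1)\bigr)\quad (t\to\infty).$$
Since $\alpha+\beta$ has even coordinates, both exponents $\alpha_1+\beta_1-1$ and $\alpha_2+\beta_2-1$ are odd. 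Taking $(s,u)=(1,1)$ and $(s,u)=(-1,1)$ therefore gives leading coefficients of opposite signs. For $t$ large, $J(p,q)$ then takes both a positive and a negative value, so it changes sign.

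The only delicate step is the non-cancellation of the top-weight term. It is handled by the strict inequality coming from $\Delta_p^\xi$ and $\Delta_q^\xi$ being single points, together with the observation that the $\xi$-weight of a Jacobian monomial depends only on $\gamma+\delta$. One should also remark that a Jacobian pair cannot satisfy these hypotheses, since $J(p,q)$ would then vanish somewhere.
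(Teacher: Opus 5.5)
Your argument is correct. The paper gives no proof of its own; it only cites \cite[Corollary 2]{gwozdziewicz2001}. Your route is the standard initial-form argument: the $\xi$-leading part of $J(p,q)$ is the single monomial $ab\det(\alpha,\beta)\,x^{\alpha_1+\beta_1-1}y^{\alpha_2+\beta_2-1}$, and it dominates along the curves $(st^{\xi_1},ut^{\xi_2})$.

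One correction concerns how you read the parity hypothesis. It means that \emph{at least one} coordinate of $\alpha+\beta$ is even. In the proof of Theorem~\ref{main} exactly one is, since the coordinates sum to $2d+7$, so under your ``both even'' reading the lemma would not apply there. So drop the claim that both exponents are odd, and flip the sign of whichever variable has odd exponent. Your comparison of $(s,u)=(1,1)$ with $(-1,1)$ already uses only the parity of $\alpha_1+\beta_1$. Comparing $(1,1)$ with $(1,-1)$ handles the case where $\alpha_2+\beta_2$ is the even coordinate.
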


Let $E\subset \R^2$ be any set. We call the polynomial $p|_E=\sum_{(i,j)\in E} a_{ij} x^i y^j$ the symbolic restriction of $p$ to $E$. 

We say that a non-zero polynomial $p\in \R[x,y]$ is degenerate on an edge $S$ of its Newton polygon if $p|_E$ posses a multiple factor coprime with $xy$. In particular, if $p$ is degenerated on $S$, then $S$ has at least one interior lattice point.

We say that a non-zero polynomial $p$ is convenient if for some positive integers $k,m$ we have $(k,0) $ and $(0,m)$ both belong to $\mathrm{supp}(p)$. Equivalently, Newton polygon $\Delta_p$ touches both axis in points different from the origin.

Let $\Delta_p$ be a convenient Newton polygon of a polynomial $p$ and $S$ be an edge of $\Delta_p$. Let $\xi$ be a vector with coprime coordinates which points outward of $S$ and is orthogonal to $S$. If $\xi$  has at least one positive coordinate, then $S$ will be called an outer edge and $\xi$ will be called and outer normal vector.


We may now combine \cite[Lemma 3.4]{braunorefice2016} and \cite[Lemma 1]{gwozdziewicz2001} into

\begin{lemma}\label{nondegenerate on edges}
    Let $(p,q)$ be a Jacobian pair. Assume that $p$ is convenient and it is nondegenerate on each outer edge of $\Delta_p$. Then $(p,q)$ is a typical Jacobian pair.
\end{lemma}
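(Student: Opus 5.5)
The plan is to derive this from the two cited results, which combine as follows. The standard tool is the characterization of non-injectivity through asymptotic values: a Jacobian pair $(p,q)$ that is not injective has a nonempty set of asymptotic values of $p$. Equivalently, by \cite[Lemma 1]{gwozdziewicz2001}, some level curve $p=c$ is disconnected or contains a component that is not a properly embedded line, and this happens only through branches at infinity along which $p$ tends to a finite value. So I will assume $(p,q)$ is atypical and produce a curve $\gamma(t)\to\infty$ along which $p(\gamma(t))$ stays bounded. The aim is to show that such a curve forces degeneracy of $p$ on some outer edge, which contradicts the hypothesis.

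\textbf{Step 1: reduce to a leading edge.} Parametrize a branch at infinity as a Puiseux series
\[
x=a t^{-\xi_1}+\dots,\qquad y=b t^{-\xi_2}+\dots,\qquad t\to 0^+,
\]
with $\max(\xi_1,\xi_2)>0$. Convenience of $p$ lets us handle branches with $a=0$ or $b=0$: in that case the monomial $x^k$ or $y^m$ dominates and $p$ is unbounded, so we may take $ab\neq 0$. The term of $p(\gamma(t))$ of order $t^{-l(\Delta_p,\xi)}$ is $p|_{\Delta_p^\xi}(a,b)$, and $l(\Delta_p,\xi)>0$ because $\Delta_p$ meets both axes away from the origin and $\xi$ has a positive coordinate. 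Boundedness therefore forces $p|_{\Delta_p^\xi}(a,b)=0$ with $ab\ne0$. A monomial has no zeros off the axes, so $\Delta_p^\xi$ must be an outer edge $S$.

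\textbf{Step 2: use nondegeneracy.} By hypothesis $p|_S$ has only simple factors coprime with $xy$. Hence $(a,b)$ is a simple root of the quasi-homogeneous polynomial $p|_S$, and some partial derivative of $p|_S$ is nonzero at $(a,b)$. By the implicit function theorem applied to Newton--Puiseux expansions, this is \cite[Lemma 3.4]{braunorefice2016}. The level curves of $p$ near infinity in direction $\xi$ then form a family parametrized by $c$ of branches $x=a t^{-\xi_1}(1+\dots)$, which depend continuously on $c$ and on which $p$ takes every value. So there are no asymptotic values of $p$ coming from $S$. Since $S$ was arbitrary, $p$ has no asymptotic values at all.

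\textbf{Step 3: conclude.} A polynomial with nowhere-vanishing gradient (here $J(p,q)\ne0$) and with no asymptotic values is a locally trivial fibration over $\R$. Each fiber is then a disjoint union of lines, and the number of lines is constant in $c$. Together with $q$ being strictly monotone on each fiber (since $J\neq0$), injectivity of $(p,q)$ reduces to showing the fibers are connected. Connectedness follows from counting the branches at infinity. Each branch corresponds to a real simple root on an outer edge, and the fibers connect them compatibly; the standard Euler characteristic argument then shows a fiber with $r$ components would make $\R^2$ a union of $r$ strips, which is consistent only for $r=1$. Alternatively one cites \cite[Lemma 1]{gwozdziewicz2001} directly, since it states exactly that the absence of asymptotic values yields injectivity for Jacobian pairs.

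\textbf{Main obstacle.} The hard part is Step 2, namely proving that a simple root on an outer edge produces no asymptotic value. One must control the higher-order Puiseux terms and ensure that the value $c$ is not hit only in the limit. This is exactly the content of \cite[Lemma 3.4]{braunorefice2016}, so I will invoke it rather than reprove it.
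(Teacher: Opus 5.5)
Your route matches the paper's. The paper proves this lemma only by combining \cite[Lemma 3.4]{braunorefice2016} with \cite[Lemma 1]{gwozdziewicz2001}, and you also hand the substantive step to these two results. The trouble is the notion your sketch is built on. You define an asymptotic value as a value approached by $p$ along a curve $\gamma(t)\to\infty$. With that definition every value of $p$ is asymptotic. Since $J(p,q)\neq 0$, $p$ is a submersion, so each nonempty fibre is a union of properly embedded lines (a compact component would bound a disc containing a critical point), and $p$ is constant along them. For instance, $p=x+y$ is convenient and nondegenerate on its only outer edge, yet $p\equiv 0$ on the line $x+y=0$. This breaks the written argument in three places:
\begin{itemize}
\item the plan in your first paragraph (a curve to infinity with bounded $p$ forces a degenerate outer edge) is false;
\item the conclusion of Step 2 that $p$ ``has no asymptotic values at all'' is false as written;
\item the hypothesis of Step 3 can never be met.
\end{itemize}
For the same reason, the alternative ``a component that is not a properly embedded line'' in your first paragraph never occurs.

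What Steps 1 and 2 actually give, and what is needed, is the absence of \emph{bifurcation values at infinity}, i.e.\ values near which $p$ fails to be locally trivial outside a compact set. Fix a compact interval $I$. Step 1 applies to any curve to infinity along which $p$ stays bounded, not only to curves inside one fibre. So by curve selection, every way in which $p^{-1}(I)$ escapes to infinity has leading coefficients at a real root $(a,b)\in(\R\setminus\{0\})^2$ of $p|_S$ for some outer edge $S$. Because the root is simple, the implicit function theorem, with $c$ as a parameter and uniformly for $c\in I$, produces exactly one branch per root, depending continuously on $c$. Its uniqueness part shows that nothing else of $p^{-1}(I)$ goes to infinity near that root. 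The rest then follows:
\begin{itemize}
\item no branch is created or lost, so $p$ is locally trivial, hence trivial, over its image;
\item $\R^2\cong\R\times p^{-1}(c)$ forces every fibre to be a single line (connectedness of $\R^2$ suffices; no Euler characteristic argument is needed);
\item strict monotonicity of $q$ along that line gives injectivity.
\end{itemize}
With ``asymptotic value'' replaced by ``bifurcation value at infinity'', your sketch is correct. It is exactly the argument the paper delegates to the two cited lemmas.
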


\begin{ex}
    In the Figure \ref{example} we see a  Newton polygon of $p=9x^2y^4-6x^3y^2+x^4+y^4-2y^2-xy+x^3$. Edges marked in red are the edges having interior lattice points. Edges $S$ and $R$ are outer edges of $\Delta_p$. Polynomial $p$ is degenerate on $R$ since $p|_R=x^2(x-3y^2)^2$ and it is nondegenerate on the other two red edges. Clearly, $p$ is convenient.

\begin{figure}
    \centering
   
\begin{center}
\begin{tikzpicture}[scale=1.05]
\draw[->,>=latex,line width=0.7pt] (-0.4,0) -- (5.4,0) node[right]{$i$};
\draw[->,>=latex,line width=0.7pt] (0,-0.4) -- (0,5.4) node[above]{$j$};
\foreach \i in {0,...,5}{\foreach \j in {0,...,5}{\fill[black!35] (\i,\j) circle (1.7pt);}}
\fill[blue!8] (2,4) -- (0,4) -- (0,2) -- (1,1) -- (3,0) -- (4,0) -- cycle;
\draw[line width=1.6pt,red] (2,4) -- (0,4);
\draw[line width=1.6pt,red] (0,4) -- (0,2);
\draw[line width=1.6pt,blue] (0,2) -- (1,1);
\draw[line width=1.6pt,blue] (1,1) -- (3,0);
\draw[line width=1.6pt,blue] (3,0) -- (4,0);
\draw[line width=1.6pt,red] (4,0) -- (2,4);
\fill[black] (2,4) circle (3.4pt);
\fill[black] (3,2) circle (3.4pt);
\fill[black] (4,0) circle (3.4pt);
\fill[black] (0,4) circle (3.4pt);
\fill[black] (0,2) circle (3.4pt);
\fill[black] (1,1) circle (3.4pt);
\fill[black] (3,0) circle (3.4pt);
\node[above right,font=\scriptsize] at (2,4) {$(2,4)$};
\node[right,xshift=3pt,font=\scriptsize] at (3,2) {$(3,2)$};
\node[below,yshift=-9pt,xshift=4pt,font=\scriptsize] at (4,0) {$(4,0)$};
\node[above left,font=\scriptsize] at (0,4) {$(0,4)$};
\node[left,font=\scriptsize] at (0,2) {$(0,2)$};
\node[below left,font=\scriptsize] at (1,1) {$(1,1)$};
\node[below,yshift=-9pt,font=\scriptsize] at (3,0) {$(3,0)$};
\node[black,font=\bfseries\large] at (1,4.45) {$S$};
\node[black,font=\bfseries\large] at (3.42,2.5) {$R$};
\end{tikzpicture}
\end{center}
 \caption{Newton polygon of $p=9x^2y^4-6x^3y^2+x^4+y^4-2y^2-xy+x^3$
}
    \label{example}
\end{figure}

\end{ex}


\section{Jacobian pairs with $\deg p =7$ and $p_+=\alpha x^7+\beta x^6y$.}
    
Let us now discuss branches at infinity of $p(x,y)=0$. By infinity, we will mean the boundary of $\R^2$ inside $\R\mathbb{P}^2$. Any branch $\gamma$ at the infinity (See \cite[Section 2]{braunEtAl2026}) can be put back to $\R^2$ with a parametrization $\gamma(t)=(x(t),y(t))$ for $0<|t|\leq \epsilon$ for some $\epsilon>0$ satisfying
\begin{equation}\label{parametrization}
\begin{aligned}
& x(t)=At^{-\alpha} + \text{higher degree terms} \\
& y(t)=Bt^{-\beta} + \text{higher degree terms}
\end{aligned}
\end{equation}

for some $A,B \in \R$, $AB \neq 0$, $\alpha, \beta \in \Z$ and $\alpha >0$ or $\beta >0$.

\begin{df}\cite[Definition 2.2]{braunEtAl2026}
Let $p(x,y)$ be a polynomial. We say that a branch $\gamma$ of $p(x,y)=0$ at infinity is associated with the outer edge $S$ of $\Delta_p$ if there exists a positive integer $m$ such that $(\alpha,\beta)=m\xi$, where $\alpha, \beta$ are given by (\ref{parametrization}) and $\xi$ is an outer normal vector of $S$.
\end{df}
If additionally, $p$ is convenient, then every branch at infinity is associated with an outer edge of $\Delta_p$ (cf. \cite[Remark 1]{braunEtAl2026}).

\begin{df}\cite[Definition 2.3]{braunEtAl2026}
    Let $p(x,y)$ be a convenient polynomial and $S$ be an outer edge of $\Delta_p$. We define $N_S(c)$ to be the number of branches of $p(x,y)-c=0$ at infinity associated with $S$. \\
    We define $N(c)$ to be total number of branches at infinity of $p(x,y)-c=0$.
\end{df}
The above definition is correct, since adding a constant to $p$ does not change the outer edges of $\Delta_p$.  If $p(x,y)$ is a convenient polynomial, then $N(c)=\sum N_S(c)$ where sum is taken over all outer edges of $\Delta_p$. Also, if $p$ is a submersion, then $N(c)$ is precisely the number of connected components of $p^{-1}(c)$ (cf. \cite[Remark 2]{braunEtAl2026}).

In the remainder of this section we will write $N$ and $N_S$ instead of $N(c)$ and $N_S(c)$ as we will be interested in properties of $N$ and $N_S$ as functions globally, and not at a single point.

\begin{lemma}\cite[Lemma 4.3]{braunEtAl2026}\label{Lemmma N-N_S}
Let $p(x,y)$ be a polynomial submersion with a convenient Newton polygon having an outer edge $S$ with exactly one interior lattice point. If the function $N-N_S$ is constant then $N \equiv 1$ or $p$ has no Jacobian mates.
    
\end{lemma}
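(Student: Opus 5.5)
The plan is to argue by contradiction. Assume $p$ has a Jacobian mate $q$ and that $N\not\equiv 1$. First I would record the global topology. Since $p$ is a submersion, every fiber $p^{-1}(c)$ is a closed one-dimensional submanifold. No component is compact: a compact component would bound a disc on which $p$ attains an interior extremum, which is impossible for a submersion. So by the remarks above, $N(c)$ is the number of components of $p^{-1}(c)$, and each component is a line with exactly two branches at infinity. Because $J(p,q)\neq 0$, $q$ is strictly monotone along each component.

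Next I would analyse the edge $S$ locally. Since $S$ has exactly one interior lattice point, after factoring out the monomial $x^ay^b$ we can write $p|_S = x^a y^b\, g(x^k, y^l)$, where $g$ is a binary quadratic form and $(k,l)$ is determined by the direction of $S$. The branches associated with $S$ are obtained from the real roots $\lambda$ of $g(1,\cdot)$ by Puiseux-type parametrizations in the weight $\xi$.

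If $g$ has two simple real roots, or none, then each root produces a fixed number of branches for every $c$, independent of $c$, so $N_S$ is constant. Then $N=(N-N_S)+N_S$ is constant. In that case I would conclude as in Lemma~\ref{nondegenerate on edges}: constancy of $N$ together with the Jacobian mate forces the fibration $p$ to be trivial over $\R$, so $N\equiv 1$. This is the step I would check against the argument of \cite{braunorefice2016}.

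The essential case is the degenerate one, $g=(x^k-\lambda y^l)^2$ up to a constant. Here I would substitute $x^k=\lambda y^l+w$ along the weight $\xi$. The equation $p=c$ near this direction then becomes a quadratic in the correction $w$. Its constant term involves $c$ only through the first monomial of $p$ beyond $S$ that is not absorbed. As a result, $N_S(c)$ takes one value for $c$ on one side of a threshold $c_0$ (or of finitely many thresholds) and a value smaller by two on the other side.

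Since $N-N_S$ is constant, every jump of $N$ happens exactly at these thresholds. There, two branches associated with $S$, running in the same asymptotic direction, appear or disappear together: a component of $p^{-1}(c)$ escapes to infinity along the direction of $S$ and splits as $c$ crosses $c_0$. Now use $q$. On a component with two ends both associated with $S$ near $c_0$, $q$ is monotone from one end to the other. Compare the leading term of $q$ along these two branches, which have the same weight $\xi$ and differ only in the sign of the correction $w$, with the behaviour of $J(p,q)$ in the coordinates $(y, w)$. I expect this to show that $J(p,q)$ has opposite signs on the two branches, in the spirit of Lemma~\ref{Delta}, or that $q$ must be unbounded in incompatible ways along them. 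Either outcome contradicts $J(p,q)\neq 0$ or the monotonicity of $q$, so $p$ can have no Jacobian mate.

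I expect the main obstacle to be this final step: making rigorous the claim that the two branches created at the threshold are incompatible with a Jacobian mate. This requires a careful weighted expansion of $q$ along $\xi$, showing that the leading form of $q$ on $S$'s direction cannot be chosen so that $J$ keeps a constant sign across the double root. My first attempt would be the observation that $q$ restricted to the $S$-direction must be, to leading order, a function of $p|_S$ (a Poisson-commuting leading form), combined with a sign count at the double root.
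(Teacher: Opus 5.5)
There is a genuine gap, in the degenerate case, which is the entire content of the lemma. The paper itself only quotes the lemma from \cite{braunEtAl2026} and gives no proof, so I am judging your argument on its own terms.

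Your nondegenerate case is fine. $p|_S$ does not involve $c$, so $N_S$, and hence $N$, is constant, and Remark~\ref{N=1 then typical} gives $N\equiv 1$. In the degenerate case, however, you only describe what you hope will happen, and that mechanism cannot work. You use the constancy of $N-N_S$ only to locate the jumps of $N$. You then look for a contradiction purely locally, in the expansion of $q$ along two branches associated with $S$ near a threshold. No such local obstruction exists, as the following example shows.

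Take $p_7$ from the final Remark, which has a Jacobian mate $q$. Its outer edge $S'$ from $(0,1)$ to $(4,3)$ has exactly one interior lattice point, and $p_7|_{S'}=y(1+x^2y)^2$ is degenerate. With $s=1+x^2y$ one checks
\[
x^2\,(p_7-c)=(s+x)^3-\bigl(s^2+3xs+(3+c)x^2\bigr).
\]
The branches associated with $S'$ are the germs of this curve at $(x,s)=(0,0)$. The tangent cone has discriminant $-3-4c$, so $N_{S'}=2$ for $c<-\tfrac34$ and $N_{S'}=0$ for $c>-\tfrac34$. Moreover, $(s+\tfrac32x)^2=(-\tfrac34-c)x^2+(s+x)^3$ with $s+x\approx-\tfrac12x$ near the origin. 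Hence for $c$ slightly below $-\tfrac34$ the curve has a small loop through the origin inside $\{x>0\}$. In $\R^2$ this loop is a whole component of $p_7^{-1}(c)$ whose two ends are both associated with $S'$, and it escapes to infinity as $c\to-\tfrac34$. This is exactly the configuration in which you expect $J(p_7,q)$ to change sign or $q$ to fail to be monotone. Yet $q$ is strictly monotone along it. So no argument using only $p|_S$ and the weighted leading terms of $q$ in the direction of $S$ can close the proof.

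What is missing is a use of the hypothesis inside the contradiction itself. For $p_7$ the loop is harmless because $N-N_{S'}$ is not constant. Indeed $p_7+1=(1+x+x^2y)\bigl((1+xy)^2+y\bigr)$, so $N(-1)=4$ and $N_{S'}(-1)=2$, whereas $N=1$ and $N_{S'}=0$ for $c>-\tfrac34$. As $c\downarrow-1$ the loop grows and breaks at infinity along the other outer edge (where $p_7$ restricts to $x(1+xy)^3$). That is how it gets attached to the rest of the foliation.

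The hypothesis that $N-N_S$ is constant is what should forbid such attachments. So you need a global analysis of how components escaping along $S$ can be glued to the remaining leaves. One natural setting is the leaf space of $p$, which is connected and simply connected and maps to $\R$ by the local homeomorphism induced by $p$. Only the configurations that survive this restriction should then be attacked with the monotonicity of $q$.

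Your local analysis also needs tightening, though these points are secondary:
\begin{itemize}
\item the substitution $x^k=\lambda y^l+w$ is not a polynomial change of variables unless $k=1$ or $l=1$;
\item the drop of $N_S$ by exactly two across finitely many thresholds is asserted rather than derived;
\item in the paper's convention a branch has two ends, so a component does not carry two branches.
\end{itemize}
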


\begin{remark}\label{N=1 then typical}
Let $(p,q)$ be Jacobian pair. If for $p$ the function $N$ is constant, then $N\equiv 1$. If additionally $p$ is convenient, then $(p,q)$ is a typical Jacobian pair (cf. \cite[Lemma 1.2]{braunorefice2016}).
\end{remark}

\begin{lemma}\cite[Lemma 4.4]{braunEtAl2026}\label{lemma 03-33}
    Let $p(x,y)$ be a polynomial submersion whose Newton polygon has an outer edge $S$ with endpoints $(0,3)$ and $(3,3)$.  Then:
    \begin{itemize}
        \item[i)] $N_S$ is constant and bigger than $0$, or
        \item[ii)] $p$ does not have a Jacobian mates, or
        \item[iii)] after an affine change of variables of the form $x:=x+a$, the Newton polygon of $p$ turns to one with edge with endpoints $(3,3)$ and $(2,3)$ and one with endpoints $(2,3)$ and $(0,1)$.
    \end{itemize}
\end{lemma}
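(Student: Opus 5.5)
Write $p=y^3f(x)+y^2g(x)+yh(x)+k(x)$. This is possible because $S$ is an outer edge with outer normal $\xi=(0,1)$, so $3$ is the largest power of $y$ in $p$. Here $p|_S=y^3f(x)$ and $f=a_0+a_1x+a_2x^2+a_3x^3$ with $a_0a_3\neq 0$. A branch of $p-c=0$ associated with $S$ has $(\alpha,\beta)=m(0,1)$. So along it $y\to\infty$ while $x$ stays bounded, and hence $x\to x_0$ for a real root $x_0$ of $f$. Since $f$ is a real cubic it has at least one real root, which is what will give $N_S>0$. Set $u=1/y$ and $F_c(x,u)=u^3(p-c)=f(x)+ug(x)+u^2h(x)+u^3(k(x)-c)$. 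Then $N_S(c)$ is the sum, over the real roots $x_0$ of $f$, of the number of real half-branches of $F_c=0$ at $(x_0,0)$ other than the component $u=0$. The plan is a local analysis at each such root, with the aim of showing that this count is independent of $c$ except in the configurations excluded by alternatives ii) and iii).

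\emph{Simple roots.} If $f(x_0)=0\neq f'(x_0)$, the implicit function theorem gives a smooth curve $x=x(u)$ through $(x_0,0)$. This contributes exactly two half-branches (one for $u>0$ and one for $u<0$), independently of $c$. So if all real roots of $f$ are simple, $N_S$ is constant and positive, which is alternative i).

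\emph{Multiple roots.} Suppose $f$ has a multiple real root $x_0$. It is then real, because a double root of a real cubic is real, and we translate $x:=x+x_0$ so that $x_0=0$. Since $a_0\neq0$, after the translation the old point $(0,3)$ moves, but the top edge still lies on $j=3$.
\begin{itemize}
\item[(a)] If the local Newton diagram of $F_c$ at $(0,0)$ (in the variables $x,u$) does not contain the point $(0,3)$ coming from $-cu^3$ on a compact face, then the count of real branches is determined by $f,g,h$ alone. In that case $N_S$ is again constant.
\item[(b)] Otherwise the local face containing $(0,3)$ decides the count. I will enumerate the few possibilities: a double root with $g(0)=0$, and a triple root with $g(0)=0$ or $g(0)=g'(0)=0$ and $h(0)=0$. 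Translating back to $p$, these correspond to explicit shapes of the part of $\Delta_p$ adjacent to the top edge.
\end{itemize}
In case (b) I will use two tools. First, Lemma~\ref{Delta} applied to suitable vectors $\xi$, together with the nonvanishing of $J(p,q)$ for a putative mate $q$, should exclude all shapes but one: where two vertices $\Delta_p^\xi$ and $\Delta_q^\xi$ are forced and their sum has even coordinates, $p$ has no Jacobian mate, which is alternative ii). Second, the submersion hypothesis removes shapes in which $p$ would acquire a critical point near infinity.

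The surviving shape should be the one with edges from $(3,3)$ to $(2,3)$ and from $(2,3)$ to $(0,1)$, which is alternative iii). I expect the main obstacle to be this case analysis for the multiple root. In particular, I need to track precisely how the face through $(0,3)$ of the local diagram corresponds to the edges of $\Delta_p$ after translation. I also need to verify, in each excluded shape, that the forced vertices of $\Delta_q$ indeed give an even sum, so that Lemma~\ref{Delta} applies. I would try first the triple root case, since there $f=a_3x^3$ after translation and the geometry is most rigid.
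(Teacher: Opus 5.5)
The paper gives no proof of this lemma; it only quotes it from \cite{braunEtAl2026}. So I am judging your plan on its own merits. Your set-up and the simple-root case are fine, except for one slip: $u=0$ is not a component of $F_c=0$, since $F_c(x,0)=f(x)\not\equiv 0$.

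Step (a) is not valid as stated. The compact faces of the local diagram determine the number of real branches only when the face polynomials are nondegenerate. Take a double root $x_0=0$ with $g(0)=0\neq h(0)$ and $ax^2+g'(0)xu+h(0)u^2=a(x-\lambda u)^2$. The point $(0,3)$ is off the local diagram. Yet $x=\lambda u+v$ turns $F_c$ into $av^2+(E-c)u^3+\cdots$ with $E$ independent of $c$, so the count can change at $c=E$. This configuration is exactly alternative iii), so the lemma survives. Still, your (a) would call it constant, and your list in (b) contradicts (a) here. Outside iii), $c$ can only matter when $g(x_0)=h(x_0)=0$, i.e.\ when $p(x_0,y)\equiv k(x_0)$.

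\textbf{The main gap is how you reach ii).} Within $g(x_0)=h(x_0)=0$ there are three subcases.
\begin{itemize}
\item If $g'(x_0)\neq 0$, the count is $2$ for every $c$.
\item If $g'(x_0)=0\neq h'(x_0)$, then $p_y(x_0,\cdot)\equiv 0$ while $p_x(x_0,y)=h'(x_0)y+k'(x_0)$ has a zero. So the submersion hypothesis excludes this case, via a critical point at finite distance, not ``near infinity''.
\item If $g'(x_0)=h'(x_0)=0\neq k'(x_0)$, neither of your tools works.
\end{itemize}

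For the last subcase, take $p=(x-1)^3y^3+(x-1)$.
\begin{itemize}
\item Its Newton polygon is $\mathrm{conv}\{(0,0),(1,0),(3,3),(0,3)\}$.
\item It is a submersion: $p_y=0$ forces $x=1$ or $y=0$, and there $p_x=1$.
\item $N_S(c)=1$ for $c\neq 0$ but $N_S(0)=2$.
\item iii) is impossible, because $(x-1)^3$ has no double root.
\end{itemize}
So the lemma asserts that this $p$ has no Jacobian mate. Lemma~\ref{Delta} cannot deliver this the way you describe. Nothing is known about $q$, so no vertex $\Delta_q^\xi$ is ``forced''. Indeed, in the coordinates where $p=x^3y^3+x$, the polygon $\mathrm{conv}\{(0,0),(2,1),(4,4)\}$ passes every vertex test of Lemma~\ref{Delta}.

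What is missing is a global argument on the level curves.
\begin{itemize}
\item The line $L=\{x=1\}$ is a whole component of $p^{-1}(0)$, and $J(p,q)=q_y$ on $L$. Hence $q|_L$ is a strictly monotone polynomial with $q(L)=\R$.
\item For small $c>0$, the level curve $y=\bigl(c-(x-1)\bigr)^{1/3}/(x-1)$, $x>1$, runs along $L$ from $y=+\infty$ down to $y$ of order $-c^{-2/3}$. It then runs along the component $R=\{y=-(x-1)^{-2/3},\ x>1\}$ of $p^{-1}(0)$.
\item $q$ is strictly monotone, hence injective, on each level curve. But on this one curve it would take all values of $q$ on $L\cap\{|y|\le M\}$, with $M\to\infty$ as $c\to 0$. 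It would also take values close to those of $q$ on a fixed compact arc of $R$. This is a contradiction.
\end{itemize}

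The same mechanism works in general. The jump of the local count at $c=k(x_0)$ comes from a second branch of $p^{-1}(k(x_0))$ tangent to $\{x=x_0\}$ at its point at infinity. Nearby level curves turn around between the two branches, so the line is non-separated from another component, and the contradiction above yields ii). Your plan needs an argument of this kind, or some other genuine obstruction to Jacobian mates.
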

For a polynomial $p(x,y)$ we denote by $p_+$ its highest homogenous part.
\begin{thm}\label{alpha}
    Let $(p,q):\R^2 \rightarrow \R^2$ be a Jacobian pair. If $p_+=\alpha x^7$, for some non-zero $\alpha \in \R$, then $(p,q)$ is a typical Jacobian pair. 
\begin{proof}
If $(p,q)$ is a Jacobian pair, then the composition with an affine map is again a Jacobian pair. As a consequence, we may assume that the Newton polygon of $p$ is convenient, has a non-zero constant term, and does not have an edge with a positive slope.

Assume by contrary that $(p,q)$ is an atypical Jacobian pair. Below, we list all possible Newton polygons for $p$. They are listed lexicographically. Edges without interior points are coloured with blue, and edges with interior points are coloured in red.

\begin{center}

\par\vspace{0.9cm}
\end{center}

Our aim is to check all possible cases and show that either $(p,q)$ is a typical Jacobian pair, or such $q$ does not exist (i.e. $p$ does not have a Jacobian mate).

If the $y$-degree of $p$ is at most 2, then \cite{braunFernandesMeza2025} implies that the pair $(p,q)$ is typical or $p$ does not have a Jacobian mate. As a consequences, Newton polygons $C_i$ for $i=22, 23, \dots, 37$ are done.

If the Newton polygon of $p$ does not have any degenerate edge, then the pair $(p,q)$ is typical by Lemma \ref{nondegenerate on edges}. Hence, Newton polygons $C_i$ for $i \in \{1,2,3,4,5, 7, 9,11,12, 16, 18,20 ,21 \}$ are done.

If $p$ is degenerate on only one edge, having only one interior lattice point, we may apply Lemma \ref{Lemmma N-N_S} with $S$ being the red edge $S$. Since $S$ is the only red edge, necessarily the function $N-N_S$ is constant, hence $N\equiv 1$ or $p$ does not have Jacobian mates. If $N \equiv 1$ then $(p,q)$ is a typical Jacobian pair by Remark \ref{N=1 then typical}. This solves the Newton polygons $C_i $ for $i \in \{6,8, 10, 14, 15,  19 \}$.

We are left with $C_{13}$ and $C_{17}$.
    Consider $C_{17}$. Denote the red edge by $S$. We have $p|_{S}=x(y-cx^2)^2(y+bx^2)$ for some $b,c \in \R$. We may now take
    the Jacobian pair $(\widetilde{p},\widetilde{q})=(p(x,y+cx^2),q(x,y+cx^2))$ which would be an atypical Jacobian pair with $\deg \widetilde{p}=6$, but such a case is impossible in view of the results of \cite{braunEtAl2026}.

    Consider $C_{13}$. We may now apply Lemma \ref{lemma 03-33}. If $(i)$ holds, then $N$ is constant, then by Lemma \ref{Lemmma N-N_S} and Remark \ref{N=1 then typical} either $p$ does not have a Jacobian mate, or $(p,q)$ is a typical Jacobian pair. If $(ii)$ holds, then obviously such $q$ does not exist. If $(iii)$ holds, we may again apply Lemma \ref{Lemmma N-N_S}.
    This finishes the proof.
\end{proof}
\end{thm}

In an analogous way we are able to consider the cases of the leading homogenous part equal to $p_+=\beta x^6y$ and $p_+=\alpha x^7 + \beta x^6y$ for $\alpha \beta \neq 0$. Proofs will be omitted, yet the possible Newton polygons will be present. The only polygons worth noting are $A_9, A_{11}, T_9, T_{11}$ where an appropriate substitution of the form $\widetilde{p}=p(x,y+cx^2)$ reduces the Newton polygon to either a polygon of a degree 6 polynomial, or a Newton polygon of a polynomial with highest homogenous part equal to $\alpha x^7$, but those were considered already.

\begin{thm}
    Let $(p,q):\R^2 \rightarrow \R^2$ be a Jacobian pair. If $p_+=\beta x^6y$ then $(p,q)$ is a typical Jacobian pair. 

\begin{center}

\par\vspace{0.9cm}
\end{center}
    
\end{thm}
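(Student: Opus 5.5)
We will mimic the proof of Theorem \ref{alpha}. Composing $(p,q)$ with an affine map of the form $(x,y)\mapsto(x+a,y+b)$ keeps $p_+=\beta x^6y$ unchanged. After such a change we may therefore assume that $\Delta_p$ is convenient, that $p$ has a non-zero constant term, and that $\Delta_p$ has no edge of positive slope. Suppose, to the contrary, that $(p,q)$ is atypical.

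The first step is to pin down the shape of $\Delta_p$. Since $p_+=\beta x^6y$, the polygon lies in the triangle $i+j\le 7$ and meets the line $i+j=7$ only in the vertex $(6,1)$. Convenience forces points $(k,0)$ with $k\le 6$ and $(0,m)$ with $m\le 7$ to lie in the support. Hence the upper boundary runs from $(0,m)$ to $(6,1)$ and then drops to the $x$-axis at some $(k,0)$. Requiring no edge of positive slope and enumerating the possible convex lattice boundaries produces a finite list, say $A_1,\dots,A_n$, ordered lexicographically as in the figure. For each polygon we record which outer edges carry interior lattice points.

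Next we dispose of the easy families exactly as before:
\begin{itemize}
\item If the $y$-degree of $p$ is at most $2$, we invoke \cite{braunFernandesMeza2025}.
\item If no outer edge has an interior lattice point, or more generally $p$ is nondegenerate on every outer edge, Lemma \ref{nondegenerate on edges} applies.
\item If $p$ is degenerate on exactly one outer edge $S$, and $S$ has a single interior lattice point, then $N-N_S$ is constant. Lemma \ref{Lemmma N-N_S} together with Remark \ref{N=1 then typical} then shows that $(p,q)$ is typical or that $p$ has no Jacobian mate.
\item Any polygon with an edge from $(0,3)$ to $(3,3)$ is handled by Lemma \ref{lemma 03-33}. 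Its three alternatives are treated as in $C_{13}$: alternative (i) gives $N$ constant, alternative (ii) says no mate exists, and alternative (iii) reduces to Lemma \ref{Lemmma N-N_S}.
\end{itemize}

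The remaining cases, $A_9$ and $A_{11}$, are those where $p$ is degenerate on an edge of slope corresponding to the weight $(1,2)$, with $p|_S$ containing a square factor $(y-cx^2)^2$. Here we substitute $\widetilde p(x,y)=p(x,y+cx^2)$ and $\widetilde q(x,y)=q(x,y+cx^2)$. This substitution is a polynomial automorphism, so it preserves both the Jacobian and (a)typicality. We then check, by tracking how the weighted-homogeneous parts transform, that $\widetilde p$ either has degree $\le 6$ or has $\widetilde p_+=\alpha x^7$. In the first case we conclude by \cite{braunEtAl2026}; in the second, by Theorem \ref{alpha}.

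The main obstacle is the completeness of the enumeration, together with verifying in $A_9$ and $A_{11}$ that the substitution really lowers the top part. The substitution must kill the term $x^6y$ (which comes from $x^2\cdot(\text{degree-}4)$ monomials) without creating anything of degree greater than 7. I would first check this on the edge polynomial $p|_S=x^a(y-cx^2)^2(\cdots)$ using weights $(1,2)$, and then bound the total degree of the image of each remaining monomial.
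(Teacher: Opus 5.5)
Your proposal is correct and follows the same route as the paper's sketched proof. You normalize by a translation, enumerate the polygons, and dispatch them with \cite{braunFernandesMeza2025} and Lemmas \ref{nondegenerate on edges}, \ref{Lemmma N-N_S}, \ref{lemma 03-33}; the two polygons $A_9$, $A_{11}$, whose edge through $(6,1)$ lies on $i+2j=8$ and ends at $(0,4)$ or $(2,3)$, are handled by $y\mapsto y+cx^2$, which lands in degree $\le 6$ or in Theorem \ref{alpha}. The checks you defer do go through: $(y-cx^2)^2\mid p|_S$ makes the new weight-$8$ part divisible by $y^2$, so it has total degree $\le 6$, and monomials of $(1,2)$-weight $\le 7$ can only produce $x^7$ in degree $7$ (two small corrections: $m\le 6$ rather than $7$, and your normalization forces $k=6$).
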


\begin{thm}
    Let $(p,q):\R^2 \rightarrow \R^2$ be a Jacobian pair. If $p_+=\alpha x^7+\beta x^6y$ for $\alpha\beta \neq 0$ then $(p,q)$ is a typical Jacobian pair.

\begin{center}
%
\par\vspace{0.9cm}
\end{center}

\end{thm}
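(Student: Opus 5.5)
The plan is to follow the scheme of the proof of Theorem \ref{alpha}, now with $p_+=\alpha x^7+\beta x^6y=x^6(\alpha x+\beta y)$ and $\alpha\beta\neq 0$.

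\textbf{Normalization.} Composing with an affine map preserves being a Jacobian pair and preserves typicality. Using this, we first arrange that $\Delta_p$ is convenient, that $p$ has a non-zero constant term, and that $\Delta_p$ has no edge of positive slope. The linear part of the change must keep $p_+$ of the form $x^6\ell$ with $\ell$ a linear form not proportional to $x$, so we only allow translations and shears fixing the line $x=0$.

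\textbf{Enumeration.} With these normalizations, the top edge of $\Delta_p$ is the segment from $(7,0)$ to $(6,1)$. Its only lattice points are its endpoints, so it is always a blue edge. The remaining outer edges lie in the region $i+j\le 7$, between $(6,1)$ and some point $(0,m)$ with $m\le 6$. We list all admissible convex lattice polygons $T_1,\dots,T_N$ lexicographically, as in the figure, and colour red the edges carrying interior lattice points.

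\textbf{Easy cases.} We then dispose of the polygons in the same order as before:
\begin{itemize}
\item If the $y$-degree of $p$ is at most $2$, we invoke \cite{braunFernandesMeza2025}.
\item If no outer edge has an interior lattice point, or $p$ is nondegenerate on every red outer edge, Lemma \ref{nondegenerate on edges} gives typicality.
\item If exactly one outer edge $S$ can be degenerate and it has exactly one interior lattice point, then $N-N_S$ is constant. Lemma \ref{Lemmma N-N_S} together with Remark \ref{N=1 then typical} then gives either $N\equiv 1$, hence typicality, or no Jacobian mate.
\item For any polygon with an edge from $(0,3)$ to $(3,3)$, we use Lemma \ref{lemma 03-33}. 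Cases i) and ii) are immediate. In case iii) we apply Lemma \ref{Lemmma N-N_S} to the new polygon.
\end{itemize}

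\textbf{Remaining cases.} These are the polygons $T_9$ and $T_{11}$, where a red edge has slope of type $(1,2)$, i.e. it lies on a line $j+2i=\mathrm{const}$. There the degenerate restriction has the form $p|_S=x^a(y-cx^2)^2 r(x,y)$. We substitute $\widetilde p=p(x,y+cx^2)$ and $\widetilde q=q(x,y+cx^2)$; this is a polynomial automorphism, so it preserves both the Jacobian condition and atypicality. We then have to check that the new Newton polygon either
\begin{itemize}
\item comes from a polynomial of degree at most $6$, which is excluded by \cite{braunEtAl2026}, or
\item has highest homogeneous part $\alpha' x^7$, which is handled by Theorem \ref{alpha}, or
\item has highest homogeneous part $\beta' x^6y$, which is handled by the preceding theorem.
\end{itemize}
The point is that the substitution $y\mapsto y+cx^2$ affects the top-degree part only through monomials on the edge $S$. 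Since the total degree stays at most $7$, the leading form can only lose its $x^6y$ term or cancel entirely.

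\textbf{Main obstacle.} I expect the hardest part to be the enumeration itself: certifying that the list $T_i$ is complete and that every polygon falls into one of the cases above. In particular, I must confirm that no polygon has two red outer edges, or a red edge with two interior points, that escapes both Lemma \ref{lemma 03-33} and the substitution trick. If such a polygon appears, I would first try a substitution $y\mapsto y+cx^k$ adapted to its degenerate edge, and then recheck the leading form.
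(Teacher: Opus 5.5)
Your proposal follows essentially the same route as the paper's omitted proof. You normalize by a translation, run through the polygons $T_i$ with the tools of Theorem \ref{alpha}, and dispose of $T_9$, $T_{11}$ by the substitution $y\mapsto y+cx^2$. There are some small fixes and one shortcut:
\begin{itemize}
\item The relevant red edge lies on $i+2j=8$ through $(6,1)$, not on $j+2i=\mathrm{const}$.
\item The new $x^7$ coefficient also picks up contributions from the weight-$7$ monomials $x^5y$, $x^3y^2$, $xy^3$, which are off $S$. So the substitution does not act on the top-degree part only through $S$. The correct reason the leading form becomes $\alpha' x^7$, or the degree drops, is that $y\mapsto y+cx^2$ preserves $(1,2)$-weighted degree and $y^2$ divides the transformed weight-$8$ part.
\item The shear $y\mapsto y-(\alpha/\beta)x$, which you already allow, turns $p_+$ into $\beta x^6y$. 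So the statement in fact follows at once from the preceding theorem.
\end{itemize}
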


\section{Jacobian pairs with  $\deg p=7$ and $\deg q$ even, coprime with $7$.}
We may now generalize the result of Gwoździewicz \cite[Theorem 3.2]{gwozdziewicz2025}.

\begin{thm}\label{main}
    Let $(p,q)$ be a Jacobian pair with $\deg p=7$. Assume that the degree of $q$ is even and coprime with $7$. Then the Jacobian pair $(p,q)$ is typical.
 \end{thm}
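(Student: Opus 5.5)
The strategy is to use Lemma \ref{Delta} to pin down the leading homogeneous part $p_+$, and then to invoke the three theorems of Section 2. Let $n=\deg q$, which is even and coprime with $7$. Since $J(p,q)$ does not vanish, the Jacobian of the leading forms must vanish, $J(p_+,q_+)=0$; otherwise $J(p_+,q_+)$ would be the top-degree part of $J(p,q)$, a nonzero homogeneous form of degree $n+5\ge 1$. This is impossible for a Jacobian pair, as explained below. From $J(p_+,q_+)=0$ and $\gcd(7,n)=1$ we get $p_+=c\,h^{n'}$ and $q_+=d\,h^{m'}$ with $7=kn'$, $n=km'$ for a homogeneous $h$ of degree $k$. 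Since $k$ divides $\gcd(7,n)=1$, we have $k=1$. Thus $p_+=c\,\ell^7$ and $q_+=d\,\ell^n$ for a real linear form $\ell$, where realness follows because $p_+^{n}$ and $q_+^{7}$ are proportional real forms. After a linear change of coordinates we may take $\ell=x$, so that $p_+=\alpha x^7$ and $q_+=\delta x^n$.

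\emph{Caveat about the definite-form case.} If $J(p_+,q_+)\neq 0$, the top part of $J$ is a form of odd degree $n+5$ (since $n$ is even), so it changes sign. Then $J(p,q)$ changes sign far away in some direction, which contradicts the pair being Jacobian. This parity argument is exactly where evenness of $n$ enters. With it, the previous paragraph gives $p_+=\alpha x^7$ directly. Theorem \ref{alpha} then finishes the proof, and the cases $x^6y$ from Section 2 are not even needed.

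If a finer argument were required (for instance to avoid the proportionality step), I would instead apply Lemma \ref{Delta} with a weight vector $\xi=(1,1)+\varepsilon\eta$ close to $(1,1)$. The vectors $\Delta_p^\xi=\{a\}$ and $\Delta_q^\xi=\{b\}$ are then vertices on the lines $i+j=7$ and $i+j=n$. Their sum has coordinate sum $7+n$, which is odd, so exactly one coordinate of $a+b$ is even. If $a$ and $b$ are linearly independent, Lemma \ref{Delta} gives that $J$ changes sign, a contradiction. Hence for both choices of perturbation direction the extreme vertices of $p_+$ and $q_+$ must be proportional. Because $\gcd(7,n)=1$, the only lattice points that are proportional with total degrees $7$ and $n$ lie on the axes. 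So the support of $p_+$ must begin and end at axis vertices lined up with those of $q_+$, which forces $p_+$ to be a monomial $x^7$ or $y^7$ in suitable coordinates. One must first use a generic linear change of coordinates so that $p_+$ and $q_+$ are not divisible by $x$ or $y$ in a way that spoils the conclusion, and then rotate so that the common linear factor is $x$.

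The main obstacle is the reduction step itself: showing rigorously that $p_+$ is a seventh power of a real linear form (and not, say, a product involving complex-conjugate factors). The $\gcd$ argument handles this cleanly once $J(p_+,q_+)=0$ is established. If some residual configuration leaves $p_+=\alpha x^7+\beta x^6y$ instead (for example if the coordinate change is restricted), the two remaining theorems of Section 2 cover it.
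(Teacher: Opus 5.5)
Your main argument is correct, and it reaches Theorem \ref{alpha} by a different route than the paper.

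The paper first uses that $p_+$ has odd degree to arrange $x\mid p_+$, and lets $k$ be the multiplicity of $x$. For $1\le k\le 6$ it applies Lemma \ref{Delta} with $\xi=[n,n+1]$, $n$ large. The vertices $(k,7-k)\in\Delta_p$ and $(l,\deg q-l)\in\Delta_q$ are independent, since dependence would give $7l=k\deg q$, which is impossible by coprimality. Their sum has odd coordinate sum $7+\deg q$, hence an even coordinate, so $J$ changes sign.

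You instead stay with the ordinary grading. Evenness of $n$ makes $J(p_+,q_+)$ a form of odd degree $n+5$, so it must vanish. The Euler-identity argument ($q_+^7/p_+^n$ has zero partials), unique factorization and $\gcd(7,n)=1$ then give $p_+=\alpha\ell^7$ and $q_+=\delta\ell^n$.

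What each buys:
\begin{itemize}
\item Your version is self-contained, does not need Lemma \ref{Delta}, and also pins down $q_+$.
\item The paper's version stays inside the Newton-polygon machinery used throughout and directly adapts Gwo\'zdziewicz's argument.
\end{itemize}
Both use only Theorem \ref{alpha}. The $x^6y$ theorems and the preliminary reduction to $\deg q\ge 8$ play no role in either.

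Some small repairs are needed.
\begin{itemize}
\item Realness of $\ell$ follows because the real odd-degree form $p_+$ has a real linear factor, which must be proportional to $\ell$. Proportionality of $p_+^n$ and $q_+^7$ is not what gives it.
\item Your alternative sketch in the third paragraph is essentially the paper's route, but its normalization is backwards. Knowing that both extreme vertices of $p_+$ lie on the axes does not make $p_+$ a monomial; think of $x^7+y^7$ with $q_+$ containing both $x^n$ and $y^n$. What works is to choose coordinates with $x\mid p_+$, which is possible since $7$ is odd. Then a non-monomial $p_+$ has its extreme vertex $(k,7-k)$ off the axes, and Lemma \ref{Delta} applies.
\item Since your main argument already shows $p_+$ is a seventh power of a real linear form, the closing hedge about $\alpha x^7+\beta x^6y$ is unnecessary.
\end{itemize}
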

\begin{proof}
    If the degree of $q$ is at most $6$ then $(p,q)$ is a typical Jacobian pair by \cite{braunEtAl2026}. 

    Assume that $d\geq 4$. Since the degree of $p$ is odd, (after a linear change of variables) we may assume that $x$ divides $p_+$. Let $k$ be the largest integer, such that $x^k$ divides $p_+$. If $k=7$ then $(p,q)$ is typical by Theorem \ref{alpha}. If $6\geq k \geq 1$, then $\alpha=(k,7-k)$ is a vertex of $\Delta_p$. Also, for any vector $\xi=[n, n+1]$ for $n\geq 7$ we have $\Delta_p^\xi=\{\alpha\}$. Let now $l$ be the smallest $i$ such that $(i, 2d-i) \in \mathrm{supp}(q)$. In particular, the point $(l, 2d-l)=\beta$ is a vertex of $\Delta_q$. Note that, for some vector $\xi=[n,n+1]$, with $n\geq 2d$, we have $\Delta_q^\xi=\{ \beta \}$. Now, $\alpha+\beta = (k+l, 2d+7-k-l)$ and precisely one coordinate is even. By the assumption $6\geq k \geq 1$ the points $\alpha$ and $\beta$ are linearly independent. Hence, by Lemma \ref{Delta} $(p,q)$ is not a Jacobian pair, since its Jacobian changes sign.
\end{proof}

\begin{remark}
    Both papers \cite{braunFernandes2023, braunEtAl2026} considered polynomial $p_7=x+y+3xy+3x^2y+2x^2y^2+3x^3y^2+x^4y^3$. In \cite{braunEtAl2026} they managed to find a Jacobian mate $q$, such that $(p_7.q)$ is an atypical Jacobian pair with $\deg q =29$. On the other hand, in \cite[Section 5]{braunFernandes2023} it was computationally checked that $p_7$ has no Jacobian mate of degree at most 15. Theorem \ref{main} implies that in order to determine smallest possible degrees of the counterexample to the real Jacobian conjecture, one has to resolve whether there are atypical Jacobian mates for $p_7$ of degrees $\{ 17,19,21,23,25,27,28 \}$.
\end{remark}


\section*{Acknowledgment}
The author certifies that the latex code for figures 1-4 was generated by AI. Newton polygons from figures 2-4 were previously drawn by hand and checked.
\bibliographystyle{plain}
\bibliography{refs.bib}

\begin{small}

\vspace{5pt}

\noindent
Tomasz Kowalczyk

\noindent
Institute of Mathematics

\noindent
Faculty of Mathematics and Computer Science

\noindent
Jagiellonian University

\noindent
ul. Łojasiewicza 6, 30-348 Kraków, Poland

\noindent
e-mail: tomek.kowalczyk@uj.edu.pl

\end{small}

\end{document}